\newtheorem{problem}{Problem}
\definecolor{codegreen}{rgb}{0,0.6,0}
\definecolor{codegray}{rgb}{0.5,0.5,0.5}
\definecolor{codepurple}{rgb}{0.58,0,0.82}
\definecolor{backcolour}{rgb}{0.95,0.95,0.92}
\lstdefinestyle{mystyle}{
	backgroundcolor=\color{backcolour},   
	commentstyle=\color{codegreen},
	keywordstyle=\color{magenta},
	numberstyle=\tiny\color{codegray},
	stringstyle=\color{codepurple},
	basicstyle=\footnotesize,
	breakatwhitespace=false,         
	breaklines=true,                 
	captionpos=b,                    
	keepspaces=true,                 
	numbers=left,                    
	numbersep=5pt,                  
	showspaces=false,                
	showstringspaces=false,
	showtabs=false,                  
	tabsize=2
}
\newcommand{\bb}{\mathbb}
\newcommand{\inv}{^{-1}}
\newcommand{\Z}{\bb Z}
\renewcommand{\S}{\mathcal{S}}
\newcommand{\T}{\mathcal{T}}
\renewcommand{\L}{\mathcal {L}}
\newcommand{\reduce}{\mathrm{replace}}
\newcommand{\replace}{\mathrm{replace}}
\renewcommand{\a}{\alpha}
\renewcommand{\b}{\beta}
\newcommand{\lcm}{\mathrm{lcm}}
\newcommand{\ab}{\alpha^{[\beta/g]}\cdot(-\beta)^{[\alpha/g]}}
\newtheorem{theorem}{Theorem}[section]
\newtheorem{lemma}[theorem]{Lemma}
\newtheorem{proposition}[theorem]{Proposition}
\newtheorem{fact}[theorem]{Fact}
\begin{document}
	\begin{frontmatter}
		
		\title{An analogue of the Erd\H{o}s-Ginzburg-Ziv Theorem over $\Z$}

		\author[aaron]{Aaron Berger}
		
		\address{Yale University, 10 Hillhouse Ave, New Haven, CT 06511}
		\fntext[aaron]{aaron.berger@yale.edu}
		
		
			\begin{abstract}
				Let $\S$ be a multiset of integers. We say $\S$ is a \emph{zero-sum sequence} if the sum of its elements is 0. We study zero-sum sequences whose elements lie in the interval $[-k,k]$ such that no subsequence of length $t$ is also zero-sum. Given these restrictions, Augspurger, Minter, Shoukry, Sissokho,  Voss show that there are arbitrarily long $t$-avoiding, $k$-bounded zero-sum sequences unless $t$ is divisible by $\lcm(2,3,4,\dots,\max(2,2k-1))$. We confirm a conjecture of these authors that for $k$ and $t$ such that this divisibility condition holds, every zero-sum sequence of length at least $t+k^2-k$ contains a zero-sum subsequence of length $t$, and that this is the minimal length for which this property holds.
			\end{abstract}

		\begin{keyword}
			Zero-Sum Sequence\sep Zero-Sum Subsequence\sep Erd\H{o}s-Ginzburg-Ziv Constant
			\MSC[2010] 11B75 \sep 11P99
		\end{keyword}
		
	\end{frontmatter}

	\section{Introduction}
	A famous result of Erd\H{o}s, Ginzburg, and Ziv \cite{EGZ61} states that in any sequence of $2n-1$ elements of $\Z/n\Z$, there is some subsequence that has length $n$ and sums to 0. This result initiated an area of research that focuses on zero-sum sequences over groups. In this paper, we prove a conjecture of Augspurger, Minter, Shoukry, Sissokho,  Voss in \cite{S16} that replicates this theorem for zero-sum sequences over the integers.
	
	We begin with some notation 
	(following \cite{grynkiewicz2013textbook,geroldinger2006notation}). Let $G$ be an abelian group and $G_0 \subseteq G$ a subset. Let $\S$ be a sequence of elements from $G_0$. The sum of the elements of $\S$ is denoted $\sigma(\S)$, and we say $\S$ is a {zero-sum sequence} if $\sigma(S) = 0$. Let $v_a(S)$ be the multiplicity of the term $a$ in $S$, and $|\S| = \sum v_a(\S)$ the length of $S$. If $\T$ is a sequence such that $v_a(\T) \leq v_a(\S)$ for all $a$, we say $\T$ is a {subsequence} of $\S$ and write $\T\mid \S$. When $G = \Z$, we say $\S$ is {$k$-bounded} if $G_0 \subseteq [-k,k]$, or equivalently if $v_a(\S) = 0$ whenever $|a| > k$.  A zero-sum sequence $\S$ is $t$-containing when there exists a $\T\mid \S$ such that $\sigma(\T) = 0$ and $|\T| = t$. A sequence that is not $t$-containing is $t$-avoiding. Finally, we denote $\alpha^{[n]}$ the sequence consisting of $n$ copies of $\alpha$, and use product notation $\T\cdot\S$ to denote the concatenation of two sequences $\T$ and $\S$. Abusing notation, we define $S \cdot a^{-n}$ to be the deletion of $n$ copies of $a$ from $S$. As a demonstrative example of the notation introduced so far, the sequence $10^{[9]}\cdot(-9)^{[10]}$ is a 10-bounded zero-sum sequence of length 19 and is $t$-avoiding for all $t \notin \{0,19\}$.

	This study of $k$-bounded, $t$-avoiding zero-sum sequences is part of the larger study of generalized Erd\H{o}s-Ginzburg-Ziv constants, defined for some $\mathcal L \subseteq \bb N$ and $G_0$ subset of an abelian group $G$. The constant $s_{\mathcal L}(G_0)$ is defined to be the minimal $\ell$ such that any sequence $\S$ of length $\ell$ whose elements come from $G_0$ contains a zero-sum subsequence $\T \mid \S$ such that $|\T| \in \mathcal L$. When $G_0 = G$ and $\mathcal L = \{\exp(G)\}$, where $\exp(G)$ is the exponent of $G$, this constant is written $s(G)$ and is called the Erdos-Ginzburg-Ziv constant. When $\mathcal L = \bb N$ this is denoted by $D(G_0)$, the Davenport constant. 
	
	When working over the integers, the problem of finding $s_\L(G_0)$ is trivial: If $G_0$ contains a nonzero element, then this constant is  $\infty$. As such, recent work in this area (see \cite{S16,sahs2012zero,sissokho2014note}) has studied a modified Erd\H{o}s-Ginzburg-Ziv constant $s'_\L(G_0)$, defined as the minimum length $\ell$ such that any \emph{zero-sum sequence} of length at least $\ell$ whose elements are in $G_0$ contains a zero-sum subsequence $\T$ such that $|\T| \in \L$.
	For example, Lambert \cite{JL87} shows that $s'_{\bb N}([-k,k]) = \max(2,2k-1)$. 
	The authors of \cite{S16} analyze the constant $s'_\L([-k,k])$ when $\L$ is a single integer $t$ and prove the following result.
	\clearpage
	\begin{theorem}[Augspurger, Minter, Shoukry, Sissokho,  Voss \cite{S16}]\label{theirTheorem}~
		\begin{itemize}
			\item $s'_t([-k,k]) < \infty$ if and only if $\lcm(2,\ldots,\max(2,2k-1)) \mid t$.
			\item If $s'_t([-k,k]) < \infty$, then $t+k^2-k \le s'_t([-k,k]) \le t+4k^2-10k+6$.
		\end{itemize}
	\end{theorem}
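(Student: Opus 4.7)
The plan is to treat the three components of the theorem --- the only-if direction of the divisibility condition, the lower bound, and the upper bound (which subsumes the if-direction) --- separately. For the only-if direction, I would argue by contrapositive: if some $d\in[2,\max(2,2k-1)]$ fails to divide $t$, then arbitrarily long $t$-avoiding $k$-bounded zero-sum sequences exist. Since $d\le 2k$, pick coprime positive integers $\alpha,\beta$ with $\alpha+\beta=d$ and both at most $k$; for any $N$, the sequence $\alpha^{[N\beta]}\cdot(-\beta)^{[N\alpha]}$ is zero-sum and $k$-bounded, and any zero-sum subsequence takes $a$ copies of $\alpha$ and $b$ copies of $-\beta$ with $a\alpha=b\beta$; coprimality forces $(a,b)=(j\beta,j\alpha)$, so its length is $jd$. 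Since $d\nmid t$, no length-$t$ subsequence exists, while the total length $Nd$ is unbounded.

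For the lower bound $s'_t([-k,k])\ge t+k^2-k$, I would build an explicit example from two minimal zero-sum sequences of almost-maximum length. Let $A_{k-1}=1^{[k-1]}\cdot(-(k-1))^{[1]}$ and $A_k=1^{[k]}\cdot(-k)^{[1]}$, of lengths $k$ and $k+1$ respectively; both are $k$-bounded zero-sum sequences with no proper nonempty zero-sum subsequence. For $n_{k-1},n_k\ge 0$, the concatenation $A_{k-1}^{[n_{k-1}]}\cdot A_k^{[n_k]}$ is a zero-sum sequence of length $k\,n_{k-1}+(k+1)n_k$, and a direct count of multiplicities shows its zero-sum subsequences are precisely those of length $kx+(k+1)y$ with $0\le x\le n_{k-1}$, $0\le y\le n_k$. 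Choose $n_{k-1}\in[0,k]$ with $k\,n_{k-1}\equiv t+k^2-k-1\pmod{k+1}$ and $n_k=(t+k^2-k-1-k\,n_{k-1})/(k+1)$, making the total length exactly $t+k^2-k-1$. Since $k^2-k-1$ is the Frobenius number of $\{k,k+1\}$, it is not representable as $kx'+(k+1)y'$ with $x',y'\ge 0$, and consequently $t=(k\,n_{k-1}+(k+1)n_k)-(k^2-k-1)$ cannot be written as $kx+(k+1)y$ within the allowed ranges, so no zero-sum subsequence has length $t$.

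For the upper bound $s'_t([-k,k])\le t+4k^2-10k+6$ (which subsumes the if-direction), I would decompose a $k$-bounded zero-sum sequence $\S$ of length $n\ge t+4k^2-10k+6$ into minimal zero-sum subsequences. By Lambert's theorem each such piece has length at most $\max(2,2k-1)$, and the hypothesis $\lcm(2,\dots,2k-1)\mid t$ forces each piece's length to divide $t$. The task reduces to a subset-sum problem: given lengths $\ell_1,\dots,\ell_m\in[2,2k-1]$ with $\sum_i\ell_i=n$, find $I\subseteq[m]$ with $\sum_{i\in I}\ell_i=t$, for then the concatenation of the selected pieces is a zero-sum subsequence of $\S$ of length exactly $t$. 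A greedy prefix argument yields a subset with sum in $(t-(2k-1),t]$, and closing the remaining gap of at most $2k-2$ requires swapping pieces in and out of $I$; the excess $n-t\ge 4k^2-10k+6$ should provide, via pigeonhole on the multiplicities of each possible length, enough room for suitable swaps.

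The gap-closing step is the main obstacle. A Frobenius/coin-problem argument handles the regime in which many distinct piece-lengths appear with substantial multiplicity, but the delicate case is when most pieces share a single length, where the quadratic excess is needed to guarantee two independent small-scale adjustments can be made simultaneously. Carefully organizing this case analysis --- for instance by separating pieces of length exactly $2k-1$ from shorter ones, and possibly inducting on the number of distinct lengths present --- is where the constant $4k^2-10k+6$ is engineered to suffice.
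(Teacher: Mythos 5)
This statement is quoted from Augspurger--Minter--Shoukry--Sissokho--Voss \cite{S16} and is not proved in the paper, so there is no internal proof to compare against; I can only assess your argument on its own terms. Your lower-bound construction is correct and is essentially the known one (it is the negation of the $\{-1,k-1,k\}$ examples the paper attributes to \cite{S16} in Section 3): the complement of a hypothetical length-$t$ zero-sum subsequence would be a zero-sum subsequence of length $k^2-k-1$, which is the Frobenius number of $\{k,k+1\}$, giving the contradiction. Your only-if direction has a small but real flaw: for composite $d$ the coprime decomposition $d=\alpha+\beta$ with $\alpha,\beta\le k$ need not exist (take $k=4$, $d=6$: the options $1+5$, $2+4$, $3+3$ all fail). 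The fix is easy --- if $\lcm(2,\ldots,2k-1)\nmid t$ then some \emph{prime power} $q\le 2k-1$ fails to divide $t$, and for prime powers the decomposition always exists since the interval $[\max(1,q-k),k]$ contains two consecutive integers, one of which is prime to $q$ --- but as written the step is wrong.

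The genuine gap is the upper bound, which you explicitly leave open. Decomposing $\S$ into minimal zero-sum pieces of lengths $\ell_i\le\max(2,2k-1)$ (via Lambert) with each $\ell_i\mid t$ is the right start, but the resulting subset-sum lemma is exactly where all the work lies, and divisibility plus $\sum_i\ell_i\ge t$ is genuinely insufficient: with $t=6$ and pieces of lengths $2,2,3$ (realizable over $[-2,2]$ as $1\cdot(-1)$, $1\cdot(-1)$, $2\cdot(-1)^{[2]}$, total length $7\ge t$) no sub-collection sums to $6$. So a quantitative excess really is needed, and your proposal does not establish that $4k^2-10k+6$ suffices --- you describe a greedy-plus-swapping scheme and then state that organizing the swaps ``is where the constant is engineered to suffice,'' which is a plan, not a proof. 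Until that combinatorial lemma (subset of $t$-dividing integers in $[1,2k-1]$ with total sum at least $t+4k^2-10k+6$ admits a sub-sum equal to $t$, or whatever refinement you actually need) is proved, the upper bound and hence the ``if'' direction of the first bullet remain unestablished. Note also that the main theorem of the present paper sharpens this upper bound to $t+k^2-k$ by an entirely different argument (the $\replace_{\a\b}$ stabilization procedure), so if you want a complete and self-contained treatment you could instead verify only the lower bound and the only-if direction from \cite{S16} and invoke Theorem \ref{MyTheorem}'s proof for the finiteness.
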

	With this result, the last remaining work regarding these constants is to close the gap between the bounds on $s'_t([-k,k])$. The authors of \cite{S16} also show that $s'_t([-k,k]) = t+k^2-k$ when $k \in \{1,2,3\}$, and conjecture that this is true for all $k$. In this paper, we confirm that conjecture, allowing us to state the following theorem.
	\begin{theorem}\label{MyTheorem}
		For all $k \in \bb N$,
		$$
		s'_t([-k,k]) = \begin{cases}
		t+k^2-k & ~~\lcm(2,\ldots,\max(2,2k-1)) \mid t~ \\
		\infty & ~~\text{otherwise.}
		\end{cases}
		$$
	\end{theorem}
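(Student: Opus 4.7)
The lower bound $s'_t([-k,k])\ge t+k^2-k$ is already contained in Theorem~\ref{theirTheorem}, so my task is the matching upper bound: every $k$-bounded zero-sum sequence $\S$ with $|\S|\ge t+k^2-k$ contains a zero-sum subsequence of length exactly $t$, under the hypothesis that $L:=\lcm(2,\ldots,\max(2,2k-1))$ divides $t$. Because $\sigma(\S)=0$, finding a zero-sum subsequence of length $t$ is the same as discarding one of length $|\S|-t$, so I may target whichever is more convenient.

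My plan is to reduce to a subset-sum problem on the lengths of minimal zero-sum ``atoms.'' By iterated application of Lambert's bound $s'_{\bb N}([-k,k])=\max(2,2k-1)$, the sequence $\S$ factors as $\mathcal Z_1\cdots\mathcal Z_m$ with each $\mathcal Z_i$ a minimal zero-sum sequence of some length $\ell_i\in[2,2k-1]$ (treating $k=1$ separately). Every union $\bigcup_{i\in I}\mathcal Z_i$ is zero-sum of length $\sum_{i\in I}\ell_i$, so it suffices to find $I\subseteq[m]$ with $\sum_{i\in I}\ell_i=t$. Setting $n_j=|\{i:\ell_i=j\}|$, the problem reduces to the purely numerical one of finding $x_j\in[0,n_j]$ with $\sum_j j\,x_j=t$, given $\sum_j j\,n_j\ge t+k^2-k$ and $L\mid t$. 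Since $(L/j)\cdot j=L$ for each $j$, spending $L/j$ atoms of length $j$ contributes $L$ exactly, so a greedy procedure that purchases multiples of $L$ using whichever length still has inventory should realize $t$, with the excess $k^2-k$ absorbing the slack produced when switching between lengths.

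The principal obstacle is that the minimal decomposition is not unique, and a single unfortunate profile $(n_j)$ may admit no subset summing to $t$. I expect the crucial step to be a refinement lemma: if the current decomposition fails, then either $\S$ is so rigid (essentially a power of a single minimal block, for which the subset-sum check is automatic from $\ell\mid L\mid t$ and the size bound) or two atoms can be recombined and re-split to produce a strictly better profile, consuming only a bounded portion of the $k^2-k$ slack per refinement. Quantifying this slack precisely, and verifying that $k^2-k$ is always just enough, is where I anticipate the sharpness of the constant entering and where the bulk of the technical work will lie. As a sanity check that the plan has a real chance, in the rigid case the divisibility $\ell\mid L\mid t$ forces $t$ to be a multiple of $\ell$, and the bound $m\ell=|\S|\ge t+k^2-k$ trivially yields $t/\ell\le m$, giving the desired subset.
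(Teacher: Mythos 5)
Your reduction to a subset-sum problem over one fixed atomic decomposition is where the argument fails, and the ``refinement lemma'' you defer is not a technical remainder --- it is the entire theorem. The purely numerical statement you reduce to is false. Concretely, take $k=4$, so $L=\lcm(2,\ldots,7)=420$ divides $t$, and consider the zero-sum, $4$-bounded sequence
$$
\left(4^{[3]}\cdot(-3)^{[4]}\right)^{[4]}\cdot\left(3^{[2]}\cdot(-2)^{[3]}\right)^{[t/5-3]},
$$
with its natural decomposition into four minimal atoms of length $7$ and $t/5-3$ minimal atoms of length $5$. Its length is $28+(t-15)=t+13\ge t+k^2-k$, yet no sub-collection of these atoms has total length $t$: a union of $y$ atoms of length $7$ and $x$ of length $5$ has length $7y+5x$, and $7y+5x=t$ forces $y\equiv 0 \pmod 5$, hence $y=0$ and $x=t/5$, exceeding the inventory $t/5-3$. (The sequence of course does contain a zero-sum subsequence of length $t$, but only by cutting across atoms, e.g.\ using pairs $3\cdot(-3)$ drawn from different atoms.) So the profile $(n_j)$ alone cannot carry the proof, and everything hinges on the re-splitting step, for which you give no mechanism, no measure of progress, and no control of how much of the $k^2-k$ slack it consumes; note also that your greedy ``buy multiples of $L$ from each length class'' can strand up to $L-j$ length in each class, a waste of order $kL$, vastly larger than $k^2-k$. (Minor points: atoms of length $1$ occur, namely zeros, and the lower bound you cite also needs the paper's upper bound $t+4k^2-10k+6$ to restrict attention to $n\le 4k^2-10k+5$, as the paper does.)

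For comparison, the paper avoids atomic decompositions entirely. It assumes a $t$-avoiding $\mathcal S$ of length $t+n$ with $n\in[k^2-k,\,4k^2-10k+5]$, observes that $\mathcal S$ is then $n$-avoiding, finds by pigeonhole two values $\alpha>0$ and $-\beta<0$ each of multiplicity at least $\frac{k}{k+1}n$, pads with copies of the zero-sum block $X=\alpha^{[\beta/g]}\cdot(-\beta)^{[\alpha/g]}$ (which preserves $n$-avoidance), and iterates a replacement operation exchanging zero-sum subsequences of length divisible by $|X|$ for copies of $X$. The Davenport constant of $\Z/N\Z$ then bounds the number of entries outside $\{\alpha,-\beta\}$ in the resulting fixed sequence by $\alpha+\beta-1$, the divisibility $|X|\mid t$ is used to strip off copies of $X$, and a case analysis (with a finite computation for $4\le k\le 6$ and a separate pigeonhole computation for $k=4$) forces $\beta=1$ and yields a contradiction via the sharp count $k(k-1)<n+1$. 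If you want to pursue your route, the re-splitting lemma you would need must do comparable work to this replacement argument, and the sharpness of $k^2-k$ (tied to the Frobenius obstruction visible in the extremal sequences over $\{-1,k-1,k\}$) enters precisely there.
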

	\subsection{Organization of Proof}
	We will show that for $k \ge 4$ and $t$ satisfying the divisibility condition of the theorem, there exists no $k$-bounded, $t$-avoiding sequence $\S$ of length at least $t+k^2-k$. The restriction that $\S$ is $t$-avoiding forces $\S$ to be $(n := |\S|-t)$-avoiding, and $n$ is very small compared to $|\S|$. We can find an $\alpha$ and $-\beta$ that occur very frequently in $\S$. Then, in a prescribed manner, we add many copies of $\a$ and $-\b$ to $\S$, as well as replace other elements of $\S$ with $\a$ and $-\b$, in a procedure that preserves the $n$-avoiding property, though not necessarily the length or the $t$-avoiding property. After this replacement is complete, we show a number of restrictions on the sequence resulting from these operations, and from this we obtain enough information about the sequence to deduce a contradiction of our original assumptions.
	
	\section{Proof of Theorem 1.2}\label{Section 2}
	We begin with an observation that zero-sum sequences from bounded sets must have many positive and negative elements:
	\begin{lemma}\label{ratios}
		Let $\S$ be a $k$-bounded zero-sum sequence. Let $\S^+$ be the subsequence of strictly positive elements of $\S$, let $\S^-$ be the strictly negative elements, and $S^0$ be the zeros. Then
		\begin{enumerate}
			\item[i.] $|\S^+|,|\S^-| \le \frac{k}{k+1}(|\S| - |\S^0|) \leq \frac{k}{k+1}|\S|$.
			\item[ii.] $|\S^+|, |\S^-| \ge \frac{1}{k+1}(|\S| - |\S^0|)$.
		\end{enumerate}
	\end{lemma}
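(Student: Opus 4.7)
The plan is to exploit the zero-sum condition together with the trivial bounds $1 \le |a| \le k$ on every nonzero element of a $k$-bounded sequence. Since the zeros contribute nothing to $\sigma(\S)$, the condition $\sigma(\S) = 0$ gives $\sigma(\S^+) = -\sigma(\S^-) = |\sigma(\S^-)|$. I will use this one equation to link the sizes of $\S^+$ and $\S^-$.

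The key inequalities are immediate from the bounds on individual terms: each element of $\S^+$ lies in $[1,k]$, so $|\S^+| \le \sigma(\S^+) \le k|\S^+|$, and each element of $\S^-$ lies in $[-k,-1]$, so $|\S^-| \le |\sigma(\S^-)| \le k|\S^-|$. Combining these with $\sigma(\S^+) = |\sigma(\S^-)|$ yields the two-sided comparison $|\S^+| \le k|\S^-|$ and $|\S^-| \le k|\S^+|$.

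For part (i), since $|\S| \ge |\S^+| + |\S^-|$ and $|\S^-| \ge |\S^+|/k$, I get $|\S| \ge |\S^+|(k+1)/k$, i.e.\ $|\S^+| \le \frac{k}{k+1}|\S|$; by symmetry the same bound holds for $|\S^-|$. Part (ii) is then a direct consequence: $|\S^0 \cdot \S^+| = |\S| - |\S^-| \ge |\S| - \frac{k}{k+1}|\S| = \frac{1}{k+1}|\S|$, and likewise for $|\S^0 \cdot \S^-|$.

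There is no real obstacle here; the lemma is a counting argument resting entirely on the two observations that nonzero integers have absolute value at least $1$ and that positive contributions to $\sigma(\S)$ must exactly balance the negative ones. The only care needed is to remember to treat $\S^0$ separately so the identity $\sigma(\S^+) + \sigma(\S^-) = 0$ is clean.
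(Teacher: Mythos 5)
Your proof is correct and follows essentially the same route as the paper: bound each nonzero element by $1$ in absolute value on one side and $k$ on the other, use $\sigma(\S^+) = -\sigma(\S^-)$ to get $|\S^-| \le k|\S^+|$ (and symmetrically), and combine with $|\S^+|+|\S^-| \le |\S|$ to obtain (i), from which (ii) is immediate.
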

	\begin{proof}
		We can bound the positive elements of $\S$ by $k$ and the negative elements by $-1$. This tells us $k|\S^+| \geq \sigma(\S^+)$ and $|\S^-| \leq -\sigma(\S^-) = \sigma(S^+) \leq k|\S^+|$. We use the second inequality to solve for $|\S^-|$ in the equation $|\S^+|+|\S^-| = |\S|-|\S^0|$ to get $\frac{k+1}{k}|\S^-| \leq |\S|-|\S^0|$. This shows (i) for the negative case, and by symmetry it holds in the positive case as well. Part (ii) follows directly from (i) by taking complements.
	\end{proof}
	
	We now move into the main body of the proof. Fix $k \ge 4$, $t$ satisfying $\lcm(2,\ldots,\max(2,2k-1)) \mid t$, and $n \in [k^2-k,4k^2-10k+5]$. Assume for the sake of contradiction that there exists a $t$-avoiding sequence $\S$ with length $|\S| = t+n$. Since $\S$ is a zero-sum sequence, a subsequence is zero-sum if and only if its complement is zero-sum. Thus, since $\S$ is $t$-avoiding it is also $n$-avoiding.

	\begin{lemma}\label{ManyTimes}
		There exist $\alpha > 0, -\beta < 0$ such that $v_\alpha(S),v_{-\beta}(S) \ge \frac{k}{k+1}n$.
	\end{lemma}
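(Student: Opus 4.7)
My plan is to combine Lemma \ref{ratios} with the observation (made just before the lemma) that $\S$ is also $n$-avoiding in order to force $|\S^+|$ and $|\S^-|$ to be large, and then to pigeonhole over the at most $k$ possible positive (respectively negative) values in $[-k,k]$.

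First, since $\S$ is $n$-avoiding, the sequence $0^{[n]}$ cannot appear as a subsequence, so $|\S^0|\le n-1$. Together with $|\S^0|+|\S^+|+|\S^-|=|\S|=t+n$, this gives $|\S^+|+|\S^-|\ge t+1$. Applying Lemma \ref{ratios}(i) to bound $|\S^-|\le\tfrac{k}{k+1}(t+n)$ and solving yields
$$|\S^+| \;\ge\; (t+1) - \frac{k}{k+1}(t+n) \;=\; \frac{t-kn+k+1}{k+1},$$
with the symmetric bound for $|\S^-|$; when $t$ is large relative to $n$, each is essentially $t/(k+1)$.

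Next, because every positive element of $\S$ lies in $\{1,\dots,k\}$, pigeonhole produces an $\alpha\in\{1,\dots,k\}$ with $v_\alpha(\S) \ge |\S^+|/k$. Chasing through the inequalities, the desired bound $v_\alpha(\S)\ge \tfrac{k}{k+1}n$ will follow once $t \ge (k+1)(kn-1)$, i.e., once $t$ is of order $k^2 n$. Since $t$ is a positive multiple of $\lcm(2,\dots,\max(2,2k-1))$, which grows superpolynomially in $k$, while $n$ is bounded polynomially in $k$ (at most on the order of $k^2$ within the range allowed by Theorem \ref{theirTheorem}), this inequality holds throughout the relevant parameter range. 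A symmetric pigeonhole argument on the negative side then produces $-\beta$ with $v_{-\beta}(\S) \ge \tfrac{k}{k+1}n$.

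The main technical obstacle I anticipate is verifying the inequality $t \ge (k+1)(kn-1)$ uniformly in $(k,n,t)$. For $k$ sufficiently large the superpolynomial growth of $\lcm(2,\dots,2k-1)$ overwhelms the polynomial $k^2 n$, but for small $k$ (most notably $k=4$, where $\lcm(2,\dots,7)=420$) the margin is thin, so I would expect either a short explicit case check or a slight sharpening of the lower bound on $|\S^+|$ via a more delicate use of the $n$-avoiding hypothesis (beyond just $|\S^0|\le n-1$) to close the gap.
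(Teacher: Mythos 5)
Your main line is the same as the paper's: use $n$-avoidance to bound the zeros, use Lemma \ref{ratios} to force $|\S^+|$ (and $|\S^-|$) to be of order $t/(k+1)$, pigeonhole over the at most $k$ positive values, and then win because $t\ge\lcm(2,\dots,2k-1)$ grows much faster than $k^2n$ (the paper checks the moderate cases by hand and invokes a Chebyshev-type estimate, $\lcm(2,\dots,2k-1)\ge 4^{k-1}$, for $k\ge 8$). One small inefficiency: you apply Lemma \ref{ratios}(i) to all of $\S$ and subtract, getting $|\S^+|\ge\frac{t-kn+k+1}{k+1}$; applying the lemma to the zero-free subsequence (which is still a $k$-bounded zero-sum sequence) gives the cleaner $|\S^+|\ge\frac{t+1}{k+1}$, which is what the paper does. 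For every $k\ge 5$ your numbers do check out ($n\le 4k^2-10k+5$ by Theorem \ref{theirTheorem}, and e.g.\ for $k=5$ one needs $t\ge 30\cdot 55-6=1644\le 2520=\lcm(2,\dots,9)$), so up to that point the proposal matches the paper.

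The genuine gap is $k=4$, which you flag but do not close, and your proposed remedies would not close it. Your required inequality $t\ge(k+1)(kn-1)=20n-5$ is not merely tight there, it is false in part of the admissible range: one only knows $n\le 4\cdot 16-40+5=29$, and $t$ may equal $\lcm(2,\dots,7)=420$, while $20\cdot 29-5=575$ (failure already begins at $n=22$). So ``a short explicit case check'' cannot succeed, and even the sharper bound $|\S^+|\ge(t+1)/5$ only reduces the requirement to $t\ge 16n-1$, which still fails for $n\ge 27$. Moreover the fix is not a ``more delicate use of the $n$-avoiding hypothesis'': the paper's Lemma \ref{k=4 case} refines the positive-side counting instead, splitting on whether $\sigma(\S^+)\le 3.5|\S^+|$. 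In the first case $|\S^+|\ge t/4.5$ and pigeonhole gives $v_\alpha\ge t/18$; in the second, more than half of the positive elements equal $4$, so $v_4>t/10$; and the final margin is razor-thin ($420/18=23.3\overline{3}\ge 23.2=\tfrac45\cdot 29$). That weighted-average refinement (or something equally sharp) is a necessary missing ingredient in your argument, since the lemma must hold for $k=4$, the first open case of the theorem.
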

	\begin{proof}
		The proof is essentially a pigeonhole argument using the fact that $|\S| \gg n$. We begin by observing $v_0(\S) < n$; otherwise $\S$ would be $n$-containing. Consequently, $\S$ contains at least $t$ nonzero elements.
		Of these, at least $\frac{1}{k+1}t$ must be positive by Lemma \ref{ratios}. By symmetry the same inequality holds for $S^-$.
		
		By pigeonhole principle, at least $\frac{1}{k}$ of these positive elements must be a single element $\alpha$. Hence, recalling the assumption $n \leq 4k^2-10k+5 \leq 4k^2$, it suffices to show $\frac{1}{k(k+1)}t \ge \frac{k}{k+1}4k^2$. By assumption $t \ge \lcm(2,\ldots,2k-1)$, so we reduce to showing $\lcm(2,\ldots,2k-1) \ge 4k^4$. We can check this holds for $k = 5,6,7,8$ by hand. For $k \ge 8$, we use the estimate provided in Equation 5 of \cite{nair1982chebyshev} to obtain  $\lcm(2,\ldots,2k-1) \ge 4^{k-1}$. Inductively, $4^{k-1} \ge 4k^4$ for $k \ge 8$.
		
		The only remaining case, $k=4$, is handled via a more careful application of the argument above and can be found in Lemma \ref{k=4 case} of the Computations section.
	\end{proof}
	It will turn out that $\frac{k}{k+1}n$ occurrences is precisely enough to allow us to pass the threshold of being ubiquitous in some sense--once we have this many copies of $\alpha$ and $-\beta$, it is equivalent to having an arbitrary number, as we will see in the following lemma. Let $g = \gcd(\a,\b)$  and define $X := \ab$ (note $X$ is zero-sum). 
	\begin{lemma}\label{adding copies still n avoiding}
		For any $N \in \bb N$, the sequence $\S' := \S \cdot X^{[N]}$ is $n$-avoiding.
	\end{lemma}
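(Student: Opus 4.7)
The plan is to argue by contradiction: suppose that $\S' = \S \cdot X^{[N]}$ contains a zero-sum subsequence $\T'$ of length $n$, and then exhibit a zero-sum subsequence of $\S$ of the same length, contradicting the $n$-avoidance of $\S$ established just before the lemma. The key idea is that because $X$ is built entirely out of $\alpha$'s and $-\beta$'s, any contribution of $X^{[N]}$ to $\T'$ can in principle be supplied instead from within $\S$, provided $\S$ contains enough copies of $\alpha$ and of $-\beta$.

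First I would split $\T'$ according to where its elements come from, writing $\T' = \T_0 \cdot \alpha^{[p]} \cdot (-\beta)^{[q]}$, where $\T_0 \mid \S$ and $\alpha^{[p]} \cdot (-\beta)^{[q]}$ is the portion drawn from $X^{[N]}$. Let $a = v_\alpha(\T_0)$ and $b = v_{-\beta}(\T_0)$, so that the total multiplicity of $\alpha$ in $\T'$ is $a+p$ and of $-\beta$ is $b+q$. The candidate replacement is the sequence $\T := \T_0 \cdot \alpha^{[p]} \cdot (-\beta)^{[q]}$, but now with the extra $p$ copies of $\alpha$ and $q$ copies of $-\beta$ drawn from $\S$ itself. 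By construction $\T$ is zero-sum of length $n$, so the only thing to verify is that $\T \mid \S$, i.e.\ that $v_\alpha(\S) \ge a+p$ and $v_{-\beta}(\S) \ge b+q$.

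The main step is therefore to bound $a+p$ and $b+q$. Here I would apply Lemma \ref{ratios} directly to $\T'$, which is a $k$-bounded zero-sum sequence of length $n$. Since every $\alpha$ contributing to $\T'$ is positive, and every $-\beta$ negative, Lemma \ref{ratios}(i) gives
\[
a+p \;\le\; |(\T')^{+}| \;\le\; \tfrac{k}{k+1}\,n,
\qquad
b+q \;\le\; |(\T')^{-}| \;\le\; \tfrac{k}{k+1}\,n.
\]
Combined with the lower bounds $v_\alpha(\S), v_{-\beta}(\S) \ge \tfrac{k}{k+1}n$ from Lemma \ref{ManyTimes}, this immediately yields $v_\alpha(\S) \ge a+p$ and $v_{-\beta}(\S) \ge b+q$, so $\T \mid \S$. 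Since $\T$ is zero-sum and $|\T| = |\T_0|+p+q = n$, this contradicts the $n$-avoidance of $\S$ and completes the proof.

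I do not expect a serious obstacle here: the threshold $\tfrac{k}{k+1}n$ in Lemma \ref{ManyTimes} was chosen precisely to match Lemma \ref{ratios} in exactly this way, so the two lemmas fit together cleanly. The only care needed is in the bookkeeping between copies of $\alpha, -\beta$ that $\T_0$ already uses from $\S$ versus those that would need to be supplied from $\S$ to rebuild $\T$, but since we only count total multiplicities and not positions, this is routine.
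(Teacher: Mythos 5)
Your proof is correct and follows essentially the same route as the paper: both bound $v_\alpha(\T')$ and $v_{-\beta}(\T')$ by $\tfrac{k}{k+1}n$ via Lemma \ref{ratios} applied to the length-$n$ zero-sum subsequence, and then use Lemma \ref{ManyTimes} to conclude the whole subsequence already fits inside $\S$, contradicting its $n$-avoidance. The extra bookkeeping of splitting $\T'$ by whether elements come from $\S$ or $X^{[N]}$ is harmless; the paper just phrases the same multiplicity check directly.
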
 
	\begin{proof}
		Assume for the sake of contradiction that $\S'$ is $n$-containing. Let $\T \mid \S'$ such that $\sigma(\T) = 0$, $|\T| = n$. From Lemma \ref{ratios} we know $|\T^+|,|\T^-| \le \frac{k}{k+1}n$. Then
		\begin{enumerate}
			\item For all $x \notin\{\a,-\b\}$, $v_x(\T) \le v_x(\S') \le v_x(\S)$
			\item For $x \in \{\a,-\b\}$, $v_x(\T) \le |T^\pm| \le \frac{k}{k+1}n \le v_x(\S)$,
		\end{enumerate}
		where the final inequality follows from Lemma \ref{ManyTimes}. Hence $\T | \S$ and so $\S$ is $n$-containing, which is our desired contradiction. 
	\end{proof}
	
	For any zero-sum sequence $S$, if there existsa $T \mid S$ satisfying $\sigma(T) = 0$ $|T| = j|X|$ for some $j \in \bb N$, and $v_x(T) > 0$ for some $x \notin \{\a,-\b\}$ define the following operation:
	$$
	\replace_{\a\b}(S) := 
	S\cdot T\inv \cdot X^{[j]}.
	$$
	When such a $T$ does not exist, let $\replace_{\a\b}(S) := 
	S$.
	
	We will fix $N = 4k^3$ 
	and let $\S' = \S \cdot X^{[N]}$ (in practice this should be thought of as fixing $N \gg_k 1$).
	Applying the definition of $\replace$, see that $|\reduce_{\a\b}(\S')| = |\S'|$ and $\sigma(\reduce_{\a\b}(\S')) = \sigma(\S') = 0$. Moreover $\reduce_{\a\b}(\S') \mid \S' \cdot X^{[j]} \mid \S \cdot X^{[N+j]}$ for some $N,j \in \bb N$ and so it is $n$-avoiding by Lemma \ref{adding copies still n avoiding}. 
	
	Inductively, the $r$-fold repeated application $\reduce^r_{\a\b}(\S')$ is an $n$-avoiding zero-sum sequence of the same legnth as $\S'$. Note that each nontrivial application of $\reduce_{\a\b}$ must strictly increase the value of $v_\a(\S')+v_{-\b}(\S')$, a quantity that is bounded by $|\S'|$. Hence there may only be a finite number of nontrivial applications of $\reduce_{\a\b}$ to $\S'$ before it becomes fixed. We will denote this fixed sequence $\mathcal R$.	

Recall the Davenport constant of modulo $N$:
\begin{fact}[Davenport constant of $\Z/N\Z$]\label{Fact}
	In any set of $N$ integers, some nonempty subset sums to a multiple of $N$.
\end{fact}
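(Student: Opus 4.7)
The plan is to apply the pigeonhole principle to a sequence of partial sums modulo $N$. Order the given integers arbitrarily as $a_1, a_2, \ldots, a_N$ and, for each $0 \le j \le N$, define the partial sum $S_j := \sum_{i=1}^{j} a_i$, with the convention $S_0 = 0$. This yields $N+1$ integers, and after reduction modulo $N$ they cannot all be distinct: there are only $N$ residue classes but $N+1$ values.

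Hence there must exist indices $0 \le i < j \le N$ with $S_i \equiv S_j \pmod{N}$. The contiguous block $a_{i+1}, a_{i+2}, \ldots, a_j$ is nonempty (since $i < j$), and its sum equals $S_j - S_i$, which by construction is divisible by $N$. This contiguous block is in particular a nonempty subset of the original collection, giving exactly the conclusion sought.

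There is essentially no obstacle to overcome; this is the classical pigeonhole proof that the Davenport constant of $\Z/N\Z$ is at most $N$. The only matter worth remarking on is that the fact is stated in terms of an arbitrary ``set'' of $N$ integers, while the argument happens to produce a \emph{contiguous} subsequence under whatever ordering we impose. Since any such contiguous block is a nonempty subset, no additional argument is required regardless of whether one reads ``set'' strictly (forcing the $a_i$ to be distinct) or loosely (allowing repetition, i.e., a multiset or sequence).
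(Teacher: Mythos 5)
Your proof is correct: the pigeonhole argument on the partial sums $S_0, S_1, \ldots, S_N$ is the standard proof of this fact, and your handling of the set-versus-multiset reading is fine since the argument never uses distinctness. The paper states this result as a known Fact without proof, so there is nothing to compare against; your argument is exactly the canonical one a reader would supply.
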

This allows us to make the following observation.
\begin{proposition}\label{nice construction}
	The sequence $\mathcal R$ contains strictly less than $\a+\b$ elements not equal to $\a$ or $-\b$.
\end{proposition}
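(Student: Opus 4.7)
The plan is to prove the contrapositive. Let $M \mid \mathcal R$ denote the subsequence consisting of elements not equal to $\alpha$ or $-\beta$, and suppose for contradiction that $|M| \ge \alpha + \beta$. I will exhibit a zero-sum subsequence $T \mid \mathcal R$ of length $j|X|$ (for some $j \in \mathbb{N}$) containing at least one element outside $\{\alpha, -\beta\}$. This would mean $\replace_{\alpha\beta}(\mathcal R) \neq \mathcal R$, contradicting the fixedness of $\mathcal R$.

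The key idea is to encode both the zero-sum condition and the length-divisibility condition into a single modular relation, so that a single application of Fact~\ref{Fact} suffices. Consider the affine map $\phi(m) = m + \beta \pmod{\alpha + \beta}$. Applying Fact~\ref{Fact} to the $\ge \alpha + \beta$ values $\phi(m)$ for $m \in M$ yields a nonempty $M' \mid M$ with
$$ \sigma(M') + \beta\,|M'| \equiv 0 \pmod{\alpha+\beta}. $$
Since $g = \gcd(\alpha,\beta)$ divides both $\beta$ and $\alpha+\beta$, reducing modulo $g$ gives $g \mid \sigma(M')$, so $a\alpha - b\beta = -\sigma(M')$ admits integer solutions. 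Writing $\alpha' = \alpha/g$ and $\beta' = \beta/g$, its general nonnegative solution is $(a,b) = (a^* + k\beta',\ b^* + k\alpha')$ for $k$ large, and hence $a + b$ varies in increments of $|X| = \alpha' + \beta'$ as $k$ varies.

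The technical heart is to check that $|M'| + a + b$ is actually a positive multiple of $|X|$. The quantity $a + b$ is pinned down modulo $|X|$ by $M'$, so this reduces to a single congruence in $|M'|$ and $\sigma(M')$. Reducing $a\alpha' - b\beta' = -\sigma(M')/g$ modulo $\alpha' + \beta'$ and using $\alpha' \equiv -\beta' \pmod{\alpha'+\beta'}$ yields $(a+b)\alpha' \equiv -\sigma(M')/g \pmod{\alpha'+\beta'}$; since $\gcd(\alpha', \alpha'+\beta') = 1$, a short manipulation (multiply through by $g\alpha'$, then use $\alpha \equiv -\beta \pmod{\alpha+\beta}$) shows that $(\alpha'+\beta') \mid |M'| + a + b$ is equivalent to $\sigma(M') + \beta|M'| \equiv 0 \pmod{\alpha+\beta}$, which is exactly the congruence we extracted from Fact~\ref{Fact}.

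Finally, because $\mathcal R$ was formed starting from $\S' = \S \cdot X^{[N]}$ with $N \gg 1$, it contains many copies of $\alpha$ and $-\beta$, so the chosen $(a,b)$ can be realized inside $\mathcal R$. Then $T = M' \cdot \alpha^{[a]} \cdot (-\beta)^{[b]}$ has $\sigma(T) = 0$, $|T| = j|X|$ with $j \ge 1$ (as $|M'| \ge 1$ forces $|T| \ge |X|$), and contains the nonempty $M' \subseteq M$, giving the promised contradiction. The main obstacle is finding the right affine shift: the cleverness lies in the choice $\phi(m) = m + \beta$, which makes the length-divisibility and zero-sum conditions collapse into a single congruence modulo $\alpha + \beta$ that Davenport can produce directly.
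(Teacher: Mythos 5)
Your proof is correct, and it reaches the contradiction by a genuinely different route from the paper's. The paper applies Fact~\ref{Fact} twice: it splits the $\alpha+\beta$ exceptional elements into $(\alpha+\beta)/g$ blocks of length $g$, extracts from each block a nonempty piece whose sum is divisible by $g$, completes each piece to a zero-sum subsequence by appending copies of $\alpha$ and $-\beta$, and then applies the Fact a second time to the lengths of these $(\alpha+\beta)/g$ zero-sum pieces modulo $(\alpha+\beta)/g=|X|$ to obtain a union whose length is a multiple of $|X|$. You instead make a single application of the Fact modulo $\alpha+\beta$ to the shifted values $m+\beta$, which packages both requirements (correctability by $\alpha$'s and $-\beta$'s, i.e.\ $g\mid\sigma(M')$, and total length $\equiv 0 \bmod |X|$ after the correction) into one congruence; the completion is then a linear Diophantine step, and your key claim that \emph{any} nonnegative solution of $a\alpha-b\beta=-\sigma(M')$ automatically satisfies $|X|\mid |M'|+a+b$ checks out (reduce $a\alpha'-b\beta'=-\sigma(M')/g$ modulo $\alpha'+\beta'$ and cancel $\alpha'$, which is legitimate since $\gcd(\alpha',\beta')=1$). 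Your version buys economy -- one Davenport application and no ad hoc padding ``until the sum is a multiple of $\beta$'' -- at the cost of a short modular computation. One point you treat as briefly as the paper does: the constructed $T$ must be a subsequence of $\mathcal R$ itself, so you need $v_\alpha(\mathcal R)$ and $v_{-\beta}(\mathcal R)$, not merely $v_\alpha(\S')$ and $v_{-\beta}(\S')$, to exceed the (bounded in terms of $k$) number of copies used; this does hold, since $\replace_{\alpha\beta}$ never increases the number of elements outside $\{\alpha,-\beta\}$, so $v_\alpha(\mathcal R)+v_{-\beta}(\mathcal R)\ge N|X|$, and the zero-sum, $k$-bounded condition then forces both multiplicities to grow with $N$ -- but it deserves a sentence in either proof.
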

\begin{proof}
	Assume for the sake of contradiction $\mathcal R$ contains $\a+\b$ elements not equal to $\a$ or $-\b$; partition them arbitrarily into sequences of length $g$. By Fact \ref{Fact}, each of these sequences contains a nonempty subsequence with sum 0 mod $g$. For each of these $(\a+\b)/g$ subsequences, construct a zero-sum subsequence of $\S$ by taking the subsequence, adding copies of $\a$ until the sum is a non-negative multiple of $\b$, and adding copies of $-\b$ until the sum is 0. 
	We can perform these constructions disjointly; a quick estimate tells us that this process uses at most $kg \leq k^2$ copies of $\alpha$ to achieve positivity, and another $\beta/g \leq k$ to become a multiple of $\beta$. Then another $\alpha/g \leq k$ copies of $-\beta$ to reach 0. Thus for all $\ab < 2k$ sequences, we require at most $(k^2+2k)2k \leq 4k^3$ copies of either $\a$ or $\b$. But by construction, $v_\a(\mathcal \S'), v_{-\b}(\mathcal \S') \geq N = 4k^3$. Since we constructed $(\a+\b)/g$ of these subsequences, some nonempty subset must have total length equivalent to 0 mod $(\a+\b)/g$ (again by Fact \ref{Fact}). The concatenation of this subset of sequences then forms a zero-sum subsequence of $\S$ with length a multiple of $(\a+\b)/g$, which by construction contains elements not equal to $\a$ or $-\b$. So $\replace_{\a\b}$ will act nontrivially on this subsequence, which is our desired contradiction.
\end{proof}
	
	At this point we make use of our original divisibility assumption on $t$. Recall that $|\mathcal R| - n = |\S'|-n = |\S|-n+N|X| = t+N|X|$ for some $N$. Moreover, $|X| \le \a+\b \le 2k-1$ for $\a \neq \b \le k$, and $|X| = 2$ otherwise. Since $\lcm(2,\ldots,\max(2,2k-1)) \mid t$, we are guaranteed that $|X| \mid t$ and so $|X| \mid (|\mathcal R|-n)$.
	Let $\ell$ be the maximum integer such that $X^{[\ell]} \mid \mathcal R$, and define $\mathcal R' = \mathcal R\cdot X^{-\ell}$. Since $\mathcal R$ is $(|\mathcal R|-n)$-avoiding, we must have $\ell < \frac{|\mathcal R|-n}{|X|}$. Correspondingly, $|\mathcal R'| \ge n+|X| = n+ \frac{\a+\b}{g}$. Additionally, by definition of $\mathcal R'$, either $v_\a(\mathcal R') < \frac{\beta}{g}$ or $v_{-\b}(\mathcal R') < \frac{\a}{g}$. Without loss of generality, assume the first condition holds. 

	 At this point, assume for the sake of contradiction that $\beta \ge 2$; we will show $\mathcal R$ contains at least $\a+\b$ elements not equal to $\a$ or $\b$, contradicting Proposition \ref{nice construction}.
	
	If we have $v_{-\b}(\mathcal R') < \frac{\a}{g}$, then $|\mathcal R'| - v_\a(\mathcal R') - v_{-\b}(\mathcal R') \ge n \ge 2k \ge \a+\b$. Otherwise, let $\mathcal R'' = \mathcal R'\cdot \a^{-v_\a(\mathcal R')} \cdot (-\b)^{-\a/g}$. So $\mathcal R''$ is formed by deleting the rest of the copies of $\a$ from $\mathcal R$ and then deleting a sufficient number of copies of $-\b$ so the sum is positive:
	$$\sigma(\mathcal R'') = \sigma(\mathcal R') - \a v_\a(\mathcal R') + \b\frac{\a}{g} > 0 - \a\frac\b g+\b\frac \a g = 0.$$ 
	
	Since $|\mathcal R'| \ge n+ \frac{\a+\b}{g}$ we have $|\mathcal R''| > n$.
	If $v_{-\b}(\mathcal R'') > \frac{k}{k+\b}|\mathcal R''|$, we would have
	
	$$\sigma(\mathcal R'') \leq \sigma((\mathcal R'')^+)-\b\cdot v_{-\b}(\mathcal R'')\leq k\left(\frac{\b}{k+\b}\right)|\mathcal R''|- \beta\left(\frac{k}{k+\b}\right)|\mathcal R''| \leq 0,$$ 
	which is a contradiction.
	
	We conclude $v_{-\b}(\mathcal R'') \leq \frac{k}{k+\b}|\mathcal R''|$, 
	and so $$|\mathcal R''| - v_\a(\mathcal R'') - v_{-\b}(\mathcal R'') = |\mathcal R''| - v_{-\b}(\mathcal R'') \geq \frac{\beta}{k+\beta}(k^2-k).$$ Noting $\a \le k$, it is sufficient to show $\frac{\beta}{k+\beta}(k^2-k) > k+\beta$ to contradict Lemma \ref{nice construction}. Taking derivatives shows this function is increasing in $k$ and $\beta$ for $k \ge 7$, $\beta \ge 2$, and evaluating shows the inequality is satisfied in this case. Hence it remains to show the lemma for $4 \le k \le 6$, $2 \le \beta \le k$. Say such a counterexample to the proposition exists. Recall $|\mathcal R'| \ge n+\frac{\a+\b}{g} \ge k^2-k+\frac{\a+\b}{g}$. Consider the subsequence of $\mathcal R'$ formed by its largest $k^2-k+(\a+\b)/g$ elements. This subsequence must have non-negative sum, contain at most $\b/g-1$ copies of $\a$, and at most $\a+\b-1$ numbers not equal to $\a$ or $-\b$. An exhaustive search shows this is simply not possible (Lemma \ref{computation}).
	
	Hence, by contradiction we conclude $\beta = 1$.
		

	\subsection{Conclusion of Proof}
	\begin{proof}[Proof of Theorem \ref{MyTheorem}]
	By assumption, $\mathcal R'$ contains at most $\b/g-1 = 1/1-1 = 0$ copies of $\a$. Consequently, the subsequence $(\mathcal R')^{\ge 0}:= (\mathcal R')^0 \cdot (\mathcal R')^{+}$ contains no copies of $\a$ or $-\b$ and so recalling Lemma \ref{nice construction}, we have $|(\mathcal R')^{\ge 0}| \leq \a+\b-1 = \a$. Recall also that $|\mathcal R'| \ge n+\frac{\a+\b}{g} = n+a+1$. 
	Finally, since $\mathcal R'$ is zero-sum, $\max\{x \in \mathcal R'\}\cdot|(\mathcal R')^+| \ge \sigma((\mathcal R')^+) = -\sigma((\mathcal R')^-) \ge |(\mathcal R')^-|$, whence
	$$
	|\mathcal R'| = |(\mathcal R')^{\ge 0}| +  |(\mathcal R')^-| \le \a +  \alpha\max\{x \in \mathcal R'\}.
	$$
	Since $\alpha \notin \mathcal R'$, we cannot have $\alpha = \max\{x \in \mathcal R'\} = k$, so this product is bounded above by $k(k-1)$. This gives us 
	$$
	|\mathcal R'| \le \alpha + k(k-1) \le \alpha+n < |\mathcal R'|,
	$$
	which is our desired contradiction.
	\end{proof}
	\section{Extremal Cases and Further Work}
	Now that we know that the longest possible $t$-avoiding sequences are length $t+k^2-k-1$, it is interesting to ask what such sequences may look like. The authors of \cite{S16} provide two such examples of such sequences whose elements come from the set $\{-1,k-1,k\}$ (or equivalently, its negation). In fact, any sequence of the appropriate length with elements from this set is $(k^2-k-1)$-avoiding via a quick application of the Frobenius Coin Problem. We now provide a rough proof to demonstrate that these are the only extremal sequences.
	
	Following the reasoning of the proof but with $n = k^2-k-1$, we find that any sequence in this extremal case has $-\beta = -1$, $\alpha \in \{k-1,k\}$, and has at least $k-1$ copies of the element in $\{k-1,k\}$ that is not $\alpha$. Assume for the sake of contradiction that $\S$ contains another element $c \notin \{-1,k-1,k\}$. We find an $\ell$ such that $\S$ contains $\ell$ elements summing to $n-\ell$. Adding $n-\ell$ copies of $-1$ gives a zero-sum subsequence of length $n$.
	\begin{itemize}
		\item If $c \in [-k,-2]$, we find $k$ elements summing to $n-k$. One of these will be $c$, so the remaining $k-1$ elements must sum to $n-k-c$, which lies somewhere in the range $[k^2-2k+1,k^2-k-1]$. Using $k-1$ elements from $\{k-1,k\}$, we may construct any sum in the range $[(k-1)(k-1),k(k-1)]$ which covers the entire range of possibilities specified above.
		\item If $c \in [0,k-2]$, we find $k-1$ elements summing to $n-(k-1)$. One of these will be $c$, so the remaining $k-2$ elements must sum to $n-(k-1)-c$, which lies somewhere in the range $[k^2-3k+2,k^2-2k]$. Using $k-2$ elements from $\{k-1,k\}$, we may construct any sum in the range $[(k-1)(k-2),k(k-2)]$ which covers the entire range of possibilities specified above.
	\end{itemize}
	Hence the only $t$-avoiding sequences of length $t+k^2-k-1$ are those composed of elements from $\{-1,k-1,k\}$ or $\{1,-k+1,-k\}$.

	The next step in this discussion would be to take the modified zero-sum constants and bring them back to the context of finite abelian groups. This leads naturally to the problem:
	\begin{problem}\label{prob1}
		Let $G$ be a finite group and $t \in \Z, t > 0$. Compute $s'_{t}(G)$.
	\end{problem}
	The most interesting, and tractable, cases of this problem will occur when $s(G)$ is already known \cite{EGZ61, reiher2007kemnitz}:
	\begin{problem}\label{prob2}
		Solve Problem \ref{prob1} for the cases $G = \Z/n\Z$ and $G = (\Z/n\Z)^2$.
	\end{problem}
\section{Computations}
	\begin{lemma}\label{k=4 case}
		Let $\S$ be defined as in Section \ref{Section 2}
		with $k = 4$. Then there exist $\alpha > 0, -\beta < 0$ such that $v_\alpha(\S),v_{-\beta}(\S) \ge \frac{k}{k+1}n$.
	\end{lemma}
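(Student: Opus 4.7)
The proof of Lemma \ref{ManyTimes} for $k \ge 5$ reduced to a pigeonhole bound that required $t \gtrsim k^2 n$; for $k \ge 5$ this followed from $\lcm(2,\ldots,2k-1) \ge 4^{k-1}$. For $k = 4$, however, $t$ can be as small as $\lcm(2,\ldots,7) = 420$ while $n$ can be as large as $29$, and the leverage factor of $k$ that naive pigeonhole supplies is then just barely insufficient. My plan is to sharpen the pigeonhole step by coupling it with the zero-sum identity $\sigma(\S^+) = -\sigma(\S^-)$.

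I will argue by contradiction: suppose no $\alpha \in \{1,2,3,4\}$ satisfies $v_\alpha(\S) \ge \frac{4}{5}n$, and set $M := \lceil \frac{4n}{5}\rceil - 1$. Directly $|\S^+| \le 4M$, and more usefully
$$\sigma(\S^+) = \sum_{\alpha = 1}^{4} \alpha\, v_\alpha(\S) \le M(1+2+3+4) = 10M.$$
The crucial step is transferring this to $|\S^-|$: the zero-sum condition gives $|\sigma(\S^-)| = \sigma(\S^+) \le 10M$, and since each negative element of $\S$ has absolute value at least $1$, $|\S^-| \le |\sigma(\S^-)| \le 10M$. Combining with $v_0(\S) \le n-1$ (else $0^{[n]} \mid \S$ would be an $n$-containing zero-sum subsequence) yields
$$|\S| = |\S^+| + |\S^-| + v_0(\S) \le 4M + 10M + (n-1) = 14M + n - 1.$$
Substituting $|\S| = t+n$ and using $t \ge 420$ gives $14M \ge t+1 \ge 421$, hence $M \ge 31$; but $n \le 29$ forces $M \le 23$, a contradiction. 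The symmetric claim (existence of $-\beta$ with $v_{-\beta}(\S) \ge \frac{4}{5}n$) follows by applying the same reasoning to the zero-sum sequence $-\S$.

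The main obstacle is that the plain count bound $|\S^+| \le kM$ provides only a factor-of-$k$ leverage, which at $k = 4$ and $t = 420$ lacks the room needed to derive a contradiction from $|\S| = t+n$. The resolution above replaces $|\S^+|$ by the weighted sum $\sigma(\S^+)$, upgrading the leverage from $k$ to $\frac{k(k+1)}{2} = 10$; this is exactly the quadratic-in-$k$ improvement needed to close the gap with room to spare in the $k = 4$ case, and no finer case analysis on the distribution of values in $\S$ is required.
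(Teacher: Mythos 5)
Your proof is correct, and it takes a genuinely different route from the paper's. The paper argues by a dichotomy on the average size of the positive elements: if $\sigma(\S^+) \le 3.5|\S^+|$ it bounds $|\S^-| \le 3.5|\S^+|$, deduces $|\S^+| \ge t/4.5$, and pigeonholes among the four positive values to get some $\alpha$ with $v_\alpha \ge t/18$; if $\sigma(\S^+) > 3.5|\S^+|$ it shows more than half of $\S^+$ equals $4$, so $v_4 \ge t/10$; each case is then checked against $\frac45 n$ using $t \ge 420$, $n \le 29$. You instead run a single global count under the contrary assumption that every positive value has multiplicity at most $M = \lceil 4n/5\rceil - 1$: then $|\S^+| \le 4M$, the weighted bound $\sigma(\S^+) \le (1+2+3+4)M = 10M$ combined with the zero-sum identity gives $|\S^-| \le -\sigma(\S^-) = \sigma(\S^+) \le 10M$, and $v_0(\S) \le n-1$ (from $n$-avoidance, which is available since $\S$ is as in Section 2) forces $t+n = |\S| \le 14M + n - 1$, i.e. $M \ge 31$, against $M \le 23$. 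Your weighted-sum step plays exactly the role of the paper's $3.5$-threshold case split, delivering the $k(k+1)/2$ leverage in one stroke, and it leaves more numerical slack ($14\cdot 23 = 322 < 421$) than the paper's tighter margin $23.33 \ge 23.2$; the paper's version, on the other hand, directly exhibits which element is frequent in each case. Your reduction of the $-\beta$ claim to the $\alpha$ claim by negating $\S$ is sound, since negation preserves the zero-sum, boundedness, length, and avoidance hypotheses, matching the paper's appeal to symmetry.
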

	\begin{proof}
		We show such a positive $\alpha$ exists, and by symmetry so must a $-\beta$.
		\begin{enumerate}
			\item If $\sigma(\S^+) \le 3.5|S^+|$, then using $|\S^-| \leq -\sigma(\S^-) = \sigma(\S^+)$ and $|\S^+|+|\S^-|\ge t$ we obtain $|\S^+| + 3.5|\S^+| \ge t$, so $|\S^+| \geq \frac{1}{4.5}t$. Of these, at least $\frac{1}{4}$ must be a single $\alpha$ by pigeonhole. Hence it suffices to check $\frac{t}{18} \geq \frac{4}{5}n$. 
			\item  If $\sigma(\S^+) > 3.5|\S^+|$, then first apply Lemma \ref{ratios}. Recallyng that by assumption, $\S$ contains at least $t$ nonzero elements, we conclude that $|\S^+| \ge \frac{1}{k+1}t = \frac{1}{5}t$. Partitioning $\S^+$ into the elements equal to 4 and those less than 4, we obtain $3.5|\S^+| < \sigma(\S^+) \leq 4v_4(\S^+)+3(|\S^+|-v_4(S^+))$. Simplifying gives $v_4(\S^+) > \frac{1}{2}|\S^+| \ge \frac{1}{10}t$. It then suffices to check $\frac{t}{10} \ge \frac{4}{5}n$.

		\end{enumerate}
		We show $\frac{t}{10} \ge \frac{t}{18} \ge \frac{4}{5}n$ by using $t \ge \lcm(2,\ldots,7) = 420$ and $n \le (4\cdot 4^2-10\cdot 4+5) = 29$. The inequality simplifies to $23.33 \ge 23.2$, so the lemma holds.
	\end{proof}
	\begin{lemma}\label{computation}
		For any $4 \le k \le 6$, $1 \leq \alpha \leq k$, and $2 \le \beta \le k$, there is no sequence of length $k^2-k+(\a+\b)/\gcd(\a,\b)$ with non-negative sum that contains at most $\b/\gcd(\a,\b)-1$ copies of $\a$ and at most $\a+\b-1$ numbers not equal to $\a$ or $-\b$.
	\end{lemma}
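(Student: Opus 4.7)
The plan is to extract a lower bound on the number of copies of $-\beta$ in any such sequence, use this to upper-bound the maximum achievable sum, and verify that the resulting bound is strictly negative for each admissible triple $(k,\alpha,\beta)$.

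First I would convert the cardinality constraints into counts. Writing $L = k^2 - k + (\alpha+\beta)/g$ for the prescribed length, $A := \beta/g - 1$ for the cap on $v_\alpha(S)$, and $O := \alpha+\beta-1$ for the cap on the number of entries not equal to $\alpha$ or $-\beta$, any valid $S$ must contain at least
\[
B := L - A - O = k^2 - k + \alpha/g - \alpha - \beta + 2
\]
copies of $-\beta$; a routine check confirms $B > 0$ throughout the range $4 \le k \le 6$, $1 \le \alpha \le k$, $2 \le \beta \le k$.

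Next I would upper-bound $\sigma(S)$. The largest admissible value for an entry of $S$ not equal to $\alpha$ or $-\beta$ is $M := k$ when $\alpha < k$ and $M := k-1$ when $\alpha = k$. The maximum of $\sigma(S)$ subject to the hypotheses is therefore attained by taking $v_\alpha(S) = A$, $v_{-\beta}(S) = B$, and the remaining $O$ entries all equal to $M$, which yields
\[
\sigma(S) \le \alpha A + M\,O - \beta B.
\]
If the right-hand side is strictly negative then the non-negativity hypothesis $\sigma(S) \ge 0$ is violated, giving the desired contradiction.

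Finally I would verify $\alpha A + M\,O - \beta B < 0$ directly for every triple $(k,\alpha,\beta)$ with $4 \le k \le 6$, $1 \le \alpha \le k$, $2 \le \beta \le k$. This is a finite check over $12+20+30 = 62$ cases (many of which collapse via $\gcd$ symmetries), each requiring only elementary arithmetic; the outcome can be presented as a short table or confirmed by a brief computer check. The main obstacle is purely bookkeeping: the bound is tightest when $\alpha = k$ (so $M$ drops to $k-1$) paired with $\beta$ for which $\beta/g = 1$ (so $A = 0$), but even in these extremal cases the bound remains negative by several units, so no further refinement is needed.
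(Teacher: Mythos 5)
Your proposal is correct and is essentially the paper's own argument: both reduce to the extremal configuration with the maximum allowed $\beta/g-1$ copies of $\alpha$, $\alpha+\beta-1$ copies of the largest admissible element ($k$, or $k-1$ when $\alpha=k$), and the remaining slots filled with $-\beta$, and then verify by a finite (hand or computer) check over all $(k,\alpha,\beta)$ that the resulting sum is negative. The paper carries out exactly this check with a short Python loop, so there is nothing substantive to add.
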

	In trying to construct such a sequence, it is always optimal to use as many copies of $\a$ as possible and use $\a+\b-1$ copies of the maximum positive number not equal to $\a$. Afterwards, we fill the remaining spots with copies of $-\b$ and check whether the sum is non-negative.

	It is reasonable to perform this check by hand; we provide some code (Python 3.5) that accomplishes the task.
	\footnote{This implementation deliberately sacrifices computational and character-count efficiency in pursuit of readability.}

	\begin{lstlisting}[language=python]
from fractions import gcd

for k in range(4,7):
  for b in range (2,k+1):
    for a in range(1,k+1):
      g = gcd(a,b)
# maxpos is the largest positive number not equal to a.      
      if a == k:
        maxpos = k-1
      else:
        maxpos = k 
      aCopies = b//g-1
      maxposCopies = a+b-1
      bCopies = (k*k-k+(a+b)//g) - aCopies - maxposCopies
      if a*aCopies + maxpos*maxposCopies - b*bCopies >= 0:
        print ('Found counterexample')
\end{lstlisting}
\section{Acknowledgements}
This research was carried out at the Duluth REU under the supervision of Joe Gallian. Duluth REU is supported by the University of Minnesota, Duluth and by grants NSF-1358659 and NSA H98230-16-1-0026. Special thanks to Levent Alpoge, Phil Matchett Wood, Joe Gallian, Papa Sissokho, and Pat Devlin for helpful comments on the paper. Finally, I am extremely grateful to my referees for going above and beyond the call of duty and performing especially careful readings with many helpful comments, corrections, and suggestions to improve this paper.
	\clearpage

	\bibliography{ZSS_Bibliography}
\end{document}